\newcommand{\be}{\begin{equation}}
\newcommand{\ee}{\end{equation}}
\newcommand{\bes}{\begin{equation*}}
\newcommand{\ees}{\end{equation*}}
\newcommand{\bH}{\mathbb{H}}
\newcommand{\N}{\mathbb{N}}
\newcommand{\R}{\mathbb{R}}
\newcommand{\mbS}{\mathbb{S}}
\newcommand{\A}{\mathbb{A}}
\newcommand{\mcL}{\mathcal{L}}
\newcommand{\mcT}{\mathcal{T}}
\newcommand{\bx}{\boldsymbol{x}}
\newcommand{\cF}{\mathcal{F}}
\newcommand{\sH}{\mathscr{H}}
\newcommand{\Span}{\mathrm{span\,}}
\DeclareMathOperator\Lip{Lip}
\DeclareMathOperator\Sc{Sc}
\DeclareMathOperator\Ve{Vec}
\newcommand{\vertiii}[1]{{\left\vert\kern-0.25ex\left\vert\kern-0.25ex\left\vert #1 
    \right\vert\kern-0.25ex\right\vert\kern-0.25ex\right\vert}}
\begin{document}

\title*{Hypercomplex Iterated Function Systems}
\author{Peter Massopust}
\institute{Peter Massopust \at Technical University of Munich, Center of Mathematics, Boltzmannstrasse 3, 85748 Garching by Munich, Germany, \email{massopust@ma.tum.de}
}
%
%
\maketitle

\abstract*{We introduce the novel concept of hypercomplex iterated function system (IFS) on the complete metric space $(\mathbb{A}_{n+1}^k,d)$ and define its hypercomplex attractor. Systems of hypercomplex function systems arising from hypercomplex IFSs and their backward trajectories are also introduced and it is shown that the attractors of such backward trajectories possess different local (fractal) shapes.}

\abstract{We introduce the novel concept of hypercomplex iterated function system (IFS) on the complete metrizable space $(\A_{n+1}^k,d)$ and define its hypercomplex attractor. Systems of hypercomplex function systems arising from hypercomplex IFSs and their backward trajectories are also introduced and it is shown that the attractors of such backward trajectories possess different local (fractal) shapes.}
\section{Preliminaries and Notation}
This paper intends to merge two areas of mathematics: Clifford Algebra and Analysis and Fractal Geometry. The former has a long successful history of extending concepts from classical analysis and function theory to a noncommutative division algebra setting and the latter has developed into an area of bourgeoning research over the last few decades. Attractors of so-called iterated function systems (IFSs) are fractal sets in the sense given first by B. Mandelbrot \cite{mandel}. Fractals generated by IFSs have the following significant approximation property, called the Collage Theorem\cite{barnsley}: Every nonempty compact subset of a complete metric space can be approximated arbitrarily close (in the sense of the Hausdorff metric) by the attractor of an IFS, that is, by a fractal set. (For the precise mathematical statement, we refer to \cite{barnsley}.) This property has important and fundamental implications in image compression and image analysis; cf. for instance, \cite{BH}. Moreover, it has initiated the construction of fractal functions and fractal surfaces and their application to approximation and interpolation theory.  A collection of related results can be found in, e.g.,\cite{massopust}. This monograph also provides the fundamental result by D. Hardin\cite{hardin}, namely, that every continuous compactly supported and refinable function, i.e., every wavelet, is a piecewise fractal function.

Here, we initiate the inclusion of fractal techniques into the Clifford setting. The non-commutativity generates more intricate fractal patterns. The structure of this paper is as follows. After a short and terse introduction to Clifford algebras and IFSs, we define hypercomplex IFSs and their attractors. The final section extends these concepts to systems of function systems as defined in \cite{LDV}.
\subsection{A Brief Introduction to Clifford Algebras}
In this section, we give a terse introduction to the concept of Clifford algebra, mainly, to set notation and terminology, and refer the reader to, for instance, \cite{Clifford,Clifford2,Hyper} . To this end, denote by $\{e_1, \ldots, e_n\}$ the canonical basis of the Euclidean vector space $\R^n$. The real Clifford algebra $C\ell(n)$ generated by $\R^n$ is defined by the multiplication rules $e_i e_j + e_j e_i = -2 \delta_{ij}$, $i,j\in \{1,\ldots, n\} =: \N_n$, where $\delta_{ij}$ is the Kronecker symbol. The dimension of $C\ell(n)$ regarded as a real vector space is $2^n$. 

An element $x\in C\ell(n)$ can be represented in the form $x = \sum\limits_{A} x_A e_A$ with $x_A\in \R$ and $\{e_A : A\subseteq \N_n\}$, where $e_A := e_{i_1} e_{i_2} \cdots e_{i_m}$, $1\leq i_1 < \cdots < i_m \leq n$, and $e_\emptyset =: e_0 := 1$. A conjugation on Clifford numbers is defined by $\overline{x} := \sum\limits_{A} x_A \overline{e}_A$ where $\overline{e}_A := \overline{e}_{i_m} \cdots \overline{e}_{i_1}$ with $\overline{e}_i := -e_i$ for $i\in\N_n$, and $\overline{e}_0 := e_0 = 1$. The Clifford norm of the Clifford number $x = \sum\limits_{A} x_A e_A$ is $|x| := \sqrt{\sum\limits_{A} |x_A|^2}$. 

An important subspace of $C\ell(n)$ is the space of hypercomplex numbers or paravectors. These are Clifford numbers of the form $x = x_0 + \sum\limits_{i=1}^n x_i e_i$. The subspace of hypercomplex numbers is denoted by $\A_{n+1} := \Span_\R\{e_0, e_1, \ldots, e_n\} = \R\oplus \R^n$. Given a Clifford number $x\in C\ell(n)$, we assign to $x$ its hypercomplex or paravector part by means of the mapping $\pi: C\ell(n)\to \A_{n+1}$, $x \mapsto x_0 + \sum\limits_{i=1}^n x_i e_i$. 

Note that each hypercomplex number $x$ can be identified with an element $(x_0, x_1, \ldots, x_n) =: (x_0, \bx)\in \R\times \R^n$. For many applications in Clifford theory, one therefore identifies $\A_{n+1}$ with $\R^{n+1}$.

The scalar part, $\Sc$, and vector part, $\Ve$, of a hypercomplex number $\A_{n+1}\ni x = x_0 + \sum\limits_{i=1}^n x_i e_i$ is given by $x_0$ and $\boldsymbol{x} = \sum\limits_{i=1}^n x_i e_i$, respectively. 

The {conjugate} $\overline{x}$ of the hypercomplex number $x = s + \bx$ is the hypercomplex number $\overline{x}= s - \bx$. The Clifford norm of $x\in \A_{n+1}$ is given by $|x| = \sqrt{x \overline{x}} = \sqrt{s^2 + |\bx|^2} = \sqrt{s^2+\sum\limits_{i=1}^n x_i^2}$.

We denote by $M_k (\A_{n+1})$ the right module of $k\times k$-matrices over $\A_{n+1}$. Every element $H = (H_{ij})$ of $M_k (\A_{n+1})$ induces a right linear transformation $L: \A_{n+1}^k\to C\ell(n)^k$ via $L(x) = H x$  defined by $L(x)_i = \sum\limits_{j=1}^k H_{ij} x_j$, $H_{ij}\in \A_{n+1}$. To obtain an endomorphism $\mcL:\A_{n+1}^k\to\A_{n+1}^k$, we set $\mcL(x)_i := \pi(L(x)_i)$, $i=1, \ldots, k$. In this case, we write $\mcL = \pi\circ L$. For example, if $n:=3$ (the case of real quaternions) $L: \A_{4}^k\to \A_{4}^k$ and thus $\mcL = L$.

A function $f:\A_{n+1}\to\A_{n+1}$ is called a hypercomplex or paravector-valued function. Any such function is of the form $f(x_0 + \bx) = f_0(x_0, |\bx|) + \omega (\bx)f_1(x_0,|\bx|)$, where $f_0, f_1:\R\times\R^n\to\R$ and $\omega (\bx) := \frac{\bx}{|\bx|}\in \mbS^n$ with $\mbS^n$ denoting the unit sphere in $\R^n$. For some properties of  hypercomplex functions, see, for instance\cite{sproessig} .
\subsection{Iterated Function Systems and Their Attractors}
Let $(X,d)$ be a complete metrizable space with metric $d$. For a map $f: X \to X$, we define the Lipschitz constant associated with $f$ by
\[
\Lip (f) = \sup_{x,y \in X, x \neq y} \frac{d\big(f(x),f(y)\big)}{d(x,y)}.
\]
A map $f$ is said to be \emph{Lipschitz} if $\Lip (f) < + \infty$ and a \emph{contraction} if $\Lip (f) < 1$.

\begin{definition}
Let $(X,d)$ be a complete metrizable space and let $\cF$ be a finite set of contractions on $X$. Then the pair $(X,\cF)$ is called an iterated function system (IFS) on $X$.
\end{definition}
With the finite set of contractions $\cF$ on $X$, one associates a set-valued operator, again denoted by $\cF$, acting on the hyperspace ${\sH}(X)$ of nonempty compact subsets of $X$ endowed with the Hausdorff-Pompeiu metric $d_{\sH}$ by
\[
\cF (E) := \bigcup_{f\in \cF} f (E),\qquad E\in \sH(X).
\]
The Hausdorff-Pompeiu metric $d_{\sH}$ is defined by
\[
d_{\sH} (S_1, S_2) := \max\{d(S_1, S_2), d(S_2, S_1)\},
\]
where $d(S_1,S_2) := \sup\limits_{x\in S_1} d(x, S_2) := \sup\limits_{x\in S_1}\inf\limits_{y\in S_2} d(x,y)$.

The completeness of $(X,d)$ implies that the set-valued operator $\cF$ is contractive on the complete metrizable space $({\sH}(X), d_{\sH})$ with Lipschitz constant $\Lip\cF = \max\{\Lip (f) : f\in \cF\}$. Therefore, by the Banach Fixed Point Theorem, $\cF$ has a unique fixed point in $\sH(X)$. This fixed point if called the \emph{attractor} of or the \emph{fractal (set)} generated by the IFS $(X,\cF)$. The attractor or fractal $F$ satisfies the self-referential equation
\be\label{fixedpoint}
F = \cF (F) = \bigcup_{f\in\cF} f (F),
\ee
i.e., $F$ is made up of a finite number of images of itself. Eqn. \eqref{fixedpoint} reflects the fractal nature of $F$ showing that it is as an object of immense geometric complexity.

The proof of the Banach Fixed Point Theorem also shows that the fractal $F$ can be iteratively
obtained via the following procedure: Choose an arbitrary $F_0\in {\sH}(X)$ and set
\be\label{F}
F_n := \cF (F_{n-1}),\qquad n\in\N.
\ee
Then $F =\lim\limits_{n\to\infty} F_n$, where the limit is taken in the Hausdorff-Pompeiu metric $d_{\sH}$.

In Figure \ref{fig1}, two examples of fractal sets are displayed.
\begin{figure}
\begin{center}
\includegraphics[width= 3cm, height = 2.5cm]{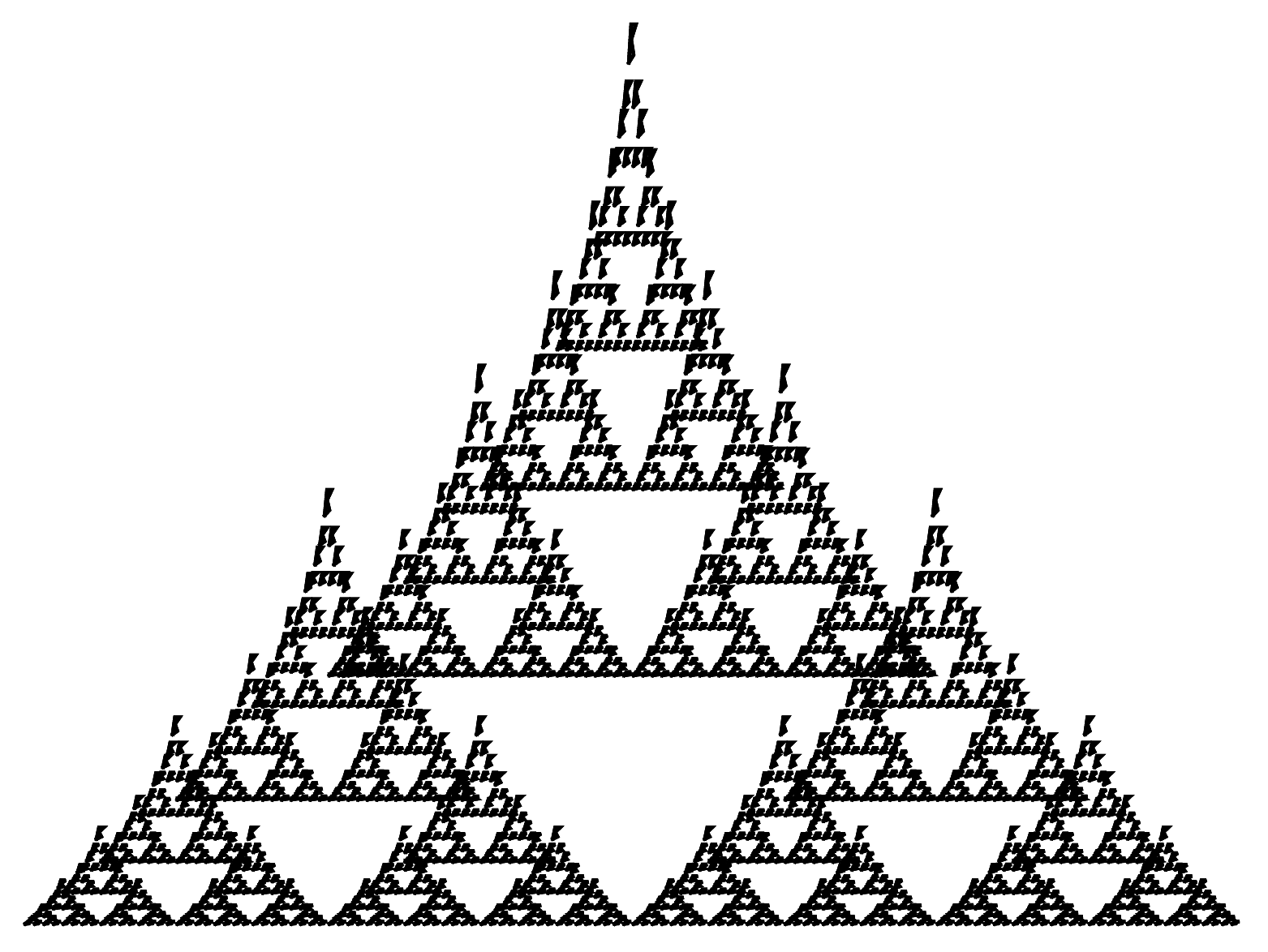}\hspace{2cm}
\includegraphics[width= 3cm, height = 2.25cm]{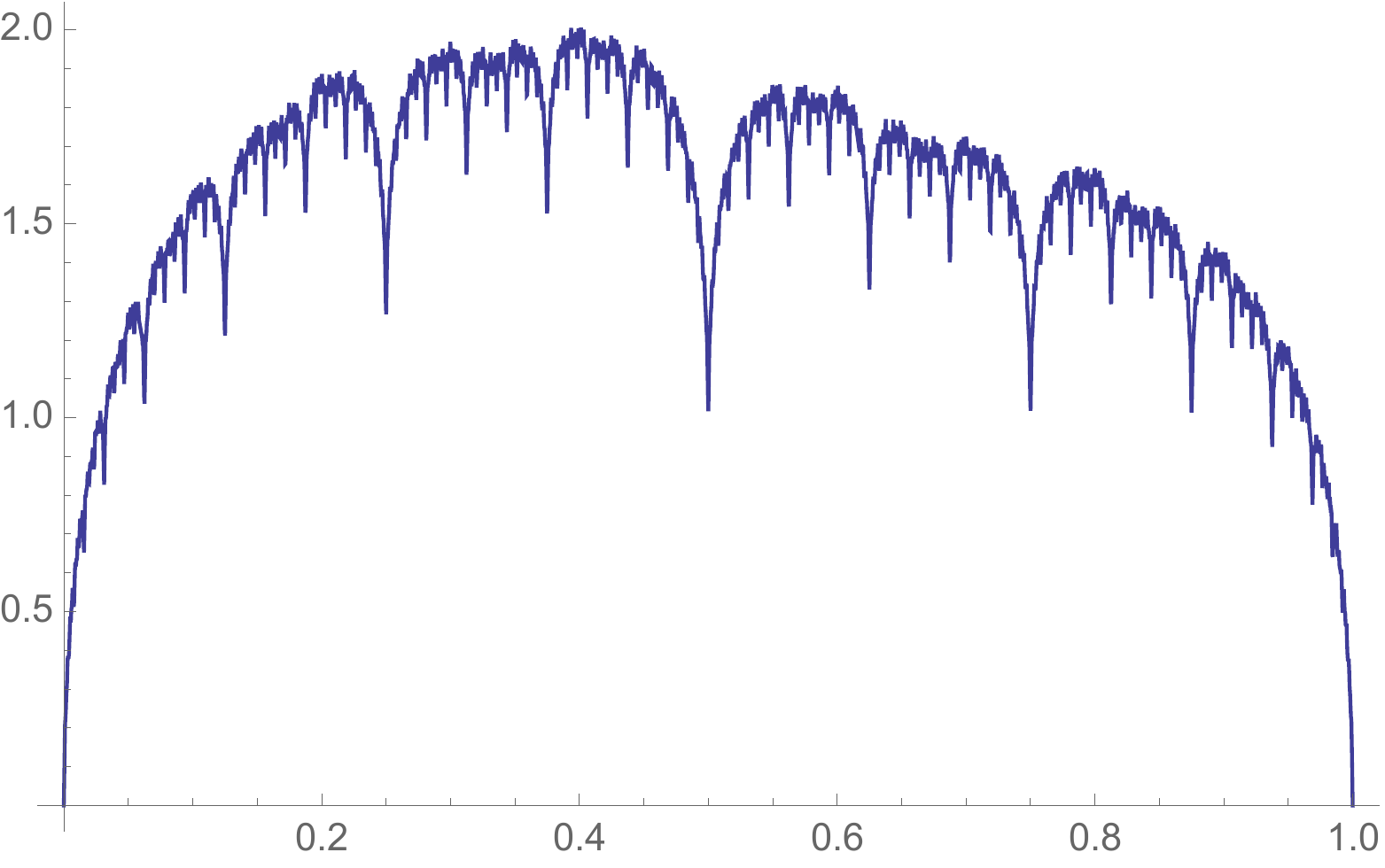}
\caption{Left: A fractal set in $X:= [0,1]^2$ generated by the maps $f_1 (x,y) := (\frac12 x, \frac12 y)$, $f_2 (x,y) := (\frac12 (x+1), \frac12 y)$, and $f_3 (x,y) := (\frac12 (x+\frac12), \frac34 y + \frac{\sqrt{3}}{4})$. Right: The graph of a \emph{fractal function} in $X:= [0,1]\times[0,3]$ generated by the maps $f_1 (x,y) := (\frac12 x, x + \frac34 y)$ and $f_2 (x,y) := (\frac12 (x+1), x^2 + \frac34 y)$.}\label{fig1}
\end{center}
\end{figure}
For more details about IFSs and fractals and their properties, we refer the interested reader to the large literature on these topics and list only two references \cite{barnsley,massopust} pertaining to the present exhibition.
\section{Hypercomplex IFSs}
Let $k\in \N$ and consider the set $\A_{n+1}^k :=  \underset{i = 1}{\overset{k}{\times}} \A_{n+1}$. We represent elements $\xi\in\A_{n+1}^k$ as column vectors. The hypercomplex conjugate ${}^*$ of $\xi\in\A_{n+1}^k$ is defined by
\[
\begin{pmatrix} \xi_1\\ \vdots\\ \xi_k
\end{pmatrix}^* := \begin{pmatrix} \overline{\xi_1} & \cdots & \overline{\xi_k}
\end{pmatrix}.
\]
Similarly, for any matrix $H = (H_{ij})$ over $\A_{n+1}$, we define $(H_{ij})^* := (\overline{H_{ji}})$. The norm of $\xi\in\A_{n+1}^k$ is defined to be
\be\label{dist}
\|\xi\| := \sqrt{\xi^*\xi} = \sqrt{\sum\limits_{i=1}^k |\xi_i|^2}.
\ee
Then $(\A_{n+1}^k,d)$ is a complete metrizable space with metric $d(\xi,\eta) := \|\xi - \eta\|$. 

We define a norm on $M_k(\A_{n+1})$ as follows. For $H\in M_k(\A_{n+1})$, set
\[
\vertiii{H} := \sup\left\{\frac{\|H\xi\|}{\|\xi\|} : 0\neq \xi\in \A_{n+1}^k\right\}.
\]
\begin{definition}
Let $k\in \N$ and let $\cF := \{f_1, \ldots, f_m\}$ be a finite collection of contractive hypercomplex functions on $\A_{n+1}^k$. Then the pair $(\A_{n+1}^k, \cF)$ is called a \emph{hypercomplex IFS} (on $\A_{n+1}^k)$. 
\end{definition}
\begin{definition}
Let $k\in \N$. An element $F\in\sH(\A_{n+1}^k)$ is called a \emph{hypercomplex attractor} of or the \emph{hypercomplex fractal (set)} generated by the hypercomplex IFS $(\A_{n+1}^k, \cF)$ if $F$ satisfies the self-referential equation
\be
F = \cF(F) = \bigcup\limits_{i=1}^m f_i (F).
\ee
\end{definition}
Note that by the Banach Fixed Point Theorem $F$ as defined above is unique.
\begin{remark}
Although the \emph{point sets} $\A_{n+1}^k$ and $(\R^{n+1})^k$ are isomorphic under an obvious bijection, the difference in the \emph{non-commutative algebraic structure} yields a broader class of attractors. (See the examples below.)
\end{remark}
As an example of a hypercomplex IFS, we consider right linear maps $L_i:\A_{n+1}^k\to C\ell(n)^k$ and define right affine maps by
\[
A_i (\xi) := L_i(\xi) + b_i = H_i \xi + b_i,
\]
with $H_i\in M_k(\A_{n+1})$ and $b_i\in \A_{n+1}^k$, $i\in\N_m$. The right affine maps $A_i$ generate hypercomplex functions $f_i$ via
\be\label{affine}
f_i (\xi):= \pi(L_i(\xi))+b_i.
\ee
It follows immediately from the definition of contraction applied to $\A_{n+1}^k$, that each $f_i$ is contractive provided that $h := \max\{\vertiii{H_i} : i\in\N_m\} < 1$. The unique fractal generated by the hypercomplex IFS $(\A_{n+1}^k, \cF)$ is then given by the nonempty compact subset $F\subset\A_{n+1}^k$ satisfying
\[
F = \bigcup_{i\in \N_m}\pi(L_i (F))+b_i.
\]
\begin{example}
Let $n := 3$ and $k:=2$. Then $\A_{4}$ can be identified with the noncommutative associative division algebra $\bH$ of quaternions with $e_0 =1$, $e_i e_j + e_j e_i = -2\delta_{ij}$, and $e_ie_j=\varepsilon_{ijk}e_k$, $i,j,k = 1,2,3$. Here, $\varepsilon_{ijk}$ denotes the Levi-Civit{\`a} symbol. We note that $\bH\cdot\bH = \bH$ and therefore $\pi(L_i) = L_i$.

On $\bH^2$ we consider the three affine mappings
\[
f_i (\xi,\eta) := \begin{pmatrix} q & 0\\ 0 & q
\end{pmatrix}\begin{pmatrix}\xi \\ \eta
\end{pmatrix} + b_i,
\]
with $b_1 := 0$, $b_2 := \begin{pmatrix}1- q \\ 0
\end{pmatrix}$, $b_3 := \begin{pmatrix}\frac12(1- q) \\ 1-q
\end{pmatrix}$, and $q := 0.3 e_0 - 0.1 e_1 + 0.4 e_2 - 0.2 e_3$. Note that $|q| = \sqrt{0.3} < 1$. Hence, the quaternionic IFS $(\bH^2, \cF)$ with $\cF=\{f_1,f_2,f_3\}$, possesses a unique attractor $F\in \sH(\bH^2)$. 

In Figure \ref{fig2}, some projections of the attractor $F$ onto subspaces of $\bH^2$ are displayed.
\begin{figure}
\begin{center}
\includegraphics[width= 3.25cm, height = 2.5cm]{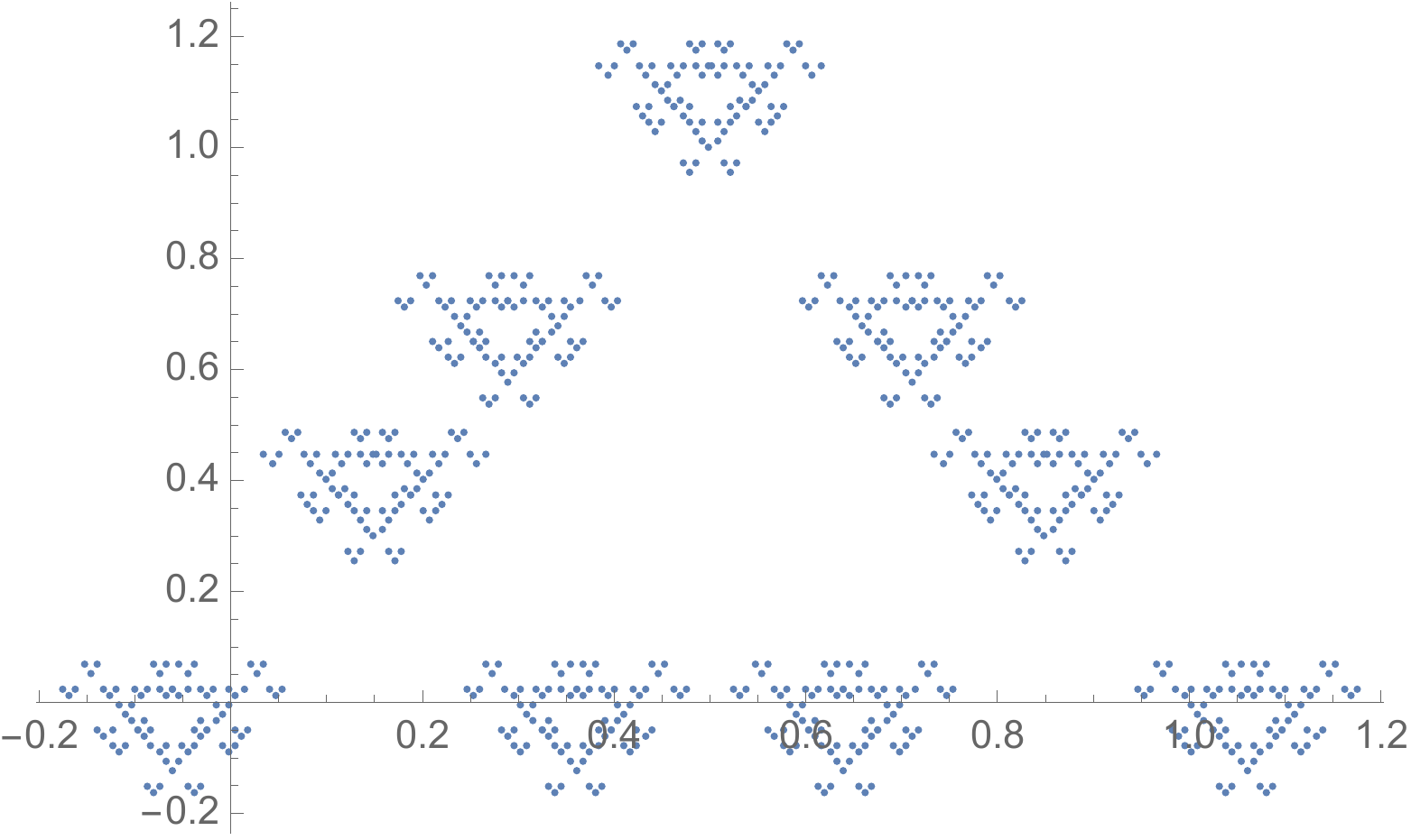}\hspace{0.75cm}
\includegraphics[width= 3.25cm, height = 2.5cm]{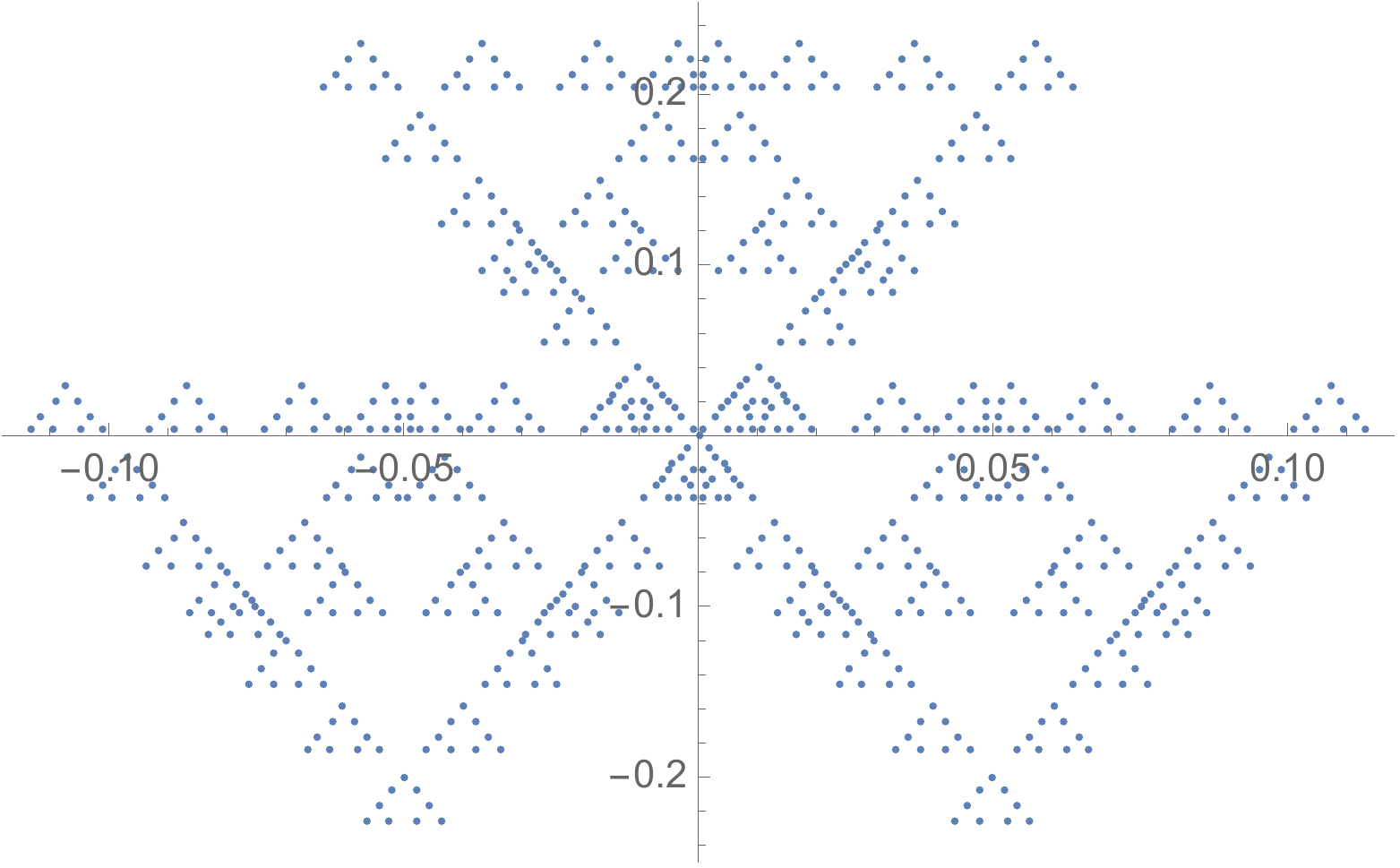}\hspace{0.75cm}
\includegraphics[width= 3.5cm, height = 2.5cm]{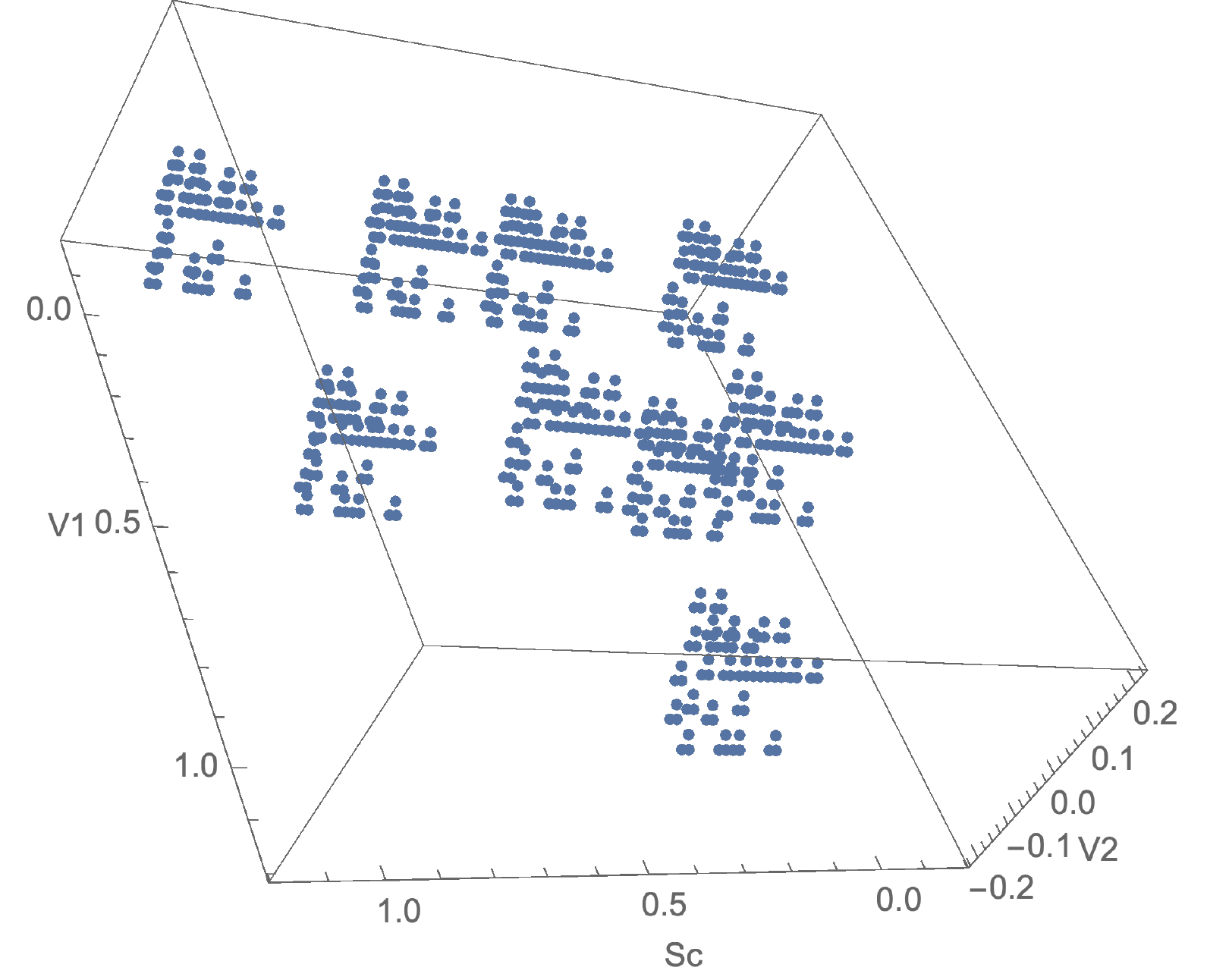}
\caption{Left: The projection of $F$ onto the $e_0-e_1$-plane. Middle: The projection of $F$ onto the $e_1-e_3$-plane. Right: The projection of $F$ onto the hyperplane spanned by $\{e_0,e_1,e_2\}$.}\label{fig2}
\end{center}
\end{figure}
\end{example}
\section{Systems of Hypercomplex Function Systems}
Employing the setting first introduced in \cite{LDV} , then generalized in \cite{DLM} , and finally applied to non-stationary IFSs in \cite{M} , we will replace the single set-valued map $\cF$ in a hypercomplex IFS by a sequence of function systems consisting of different families $\cF $ in order to define an iterative process $\{F_n\}_{n\in \N}$ with initial $F_0 \in \mathscr{H}(\A_{n+1}^k)$ as in \eqref{F}. 

To this end, consider the complete metrizable space $(\A_{n+1}^k,d)$ and let $\{\mcT_\ell\}_{\ell\in \N}$ be a sequence of transformations on $\A_{n+1}^k$, i.e., $\mcT_\ell:  \A_{n+1}^k\to \A_{n+1}^k$.

\begin{definition}
A subset $\mathscr{I}$ of $\A_{n+1}^k$ is called a hypercomplex invariant set of the sequence $\{\mcT_\ell\}_{\ell\in \N}$ if
\[
\forall\,\ell\in \N\;\forall\,x\in \mathscr{I}: \mcT_\ell (x)\in \mathscr{I}.
\]
\end{definition}
A criterion for obtaining a hypercomplex invariant domain for a sequence $\{\mcT_\ell\}_{\ell\in \N}$ of transformations on a complete metrizable space is the following which first appeared in \cite{LDV} . We will state the result for our setting.

\begin{proposition}
Let $\{\mcT_\ell\}_{\ell\in \N}$ be a sequence of transformations on the complete metrizable space $(\A_{n+1}^k,d)$. Assume that there exists an $x_0\in \A_{n+1}^k$ such that for all $x\in \A_{n+1}^k$
\be\label{cond}
d(\mcT_\ell(x),x_0) \leq s\,d(x,x_0) + M,
\ee
for some $s\in [0,1)$ and $M> 0$. Then any ball $B_r(x_0) := \{x\in  \A_{n+1}^k : |x - x_0| < r\}$ with radius $r > M/(1-s)$ is a hypercomplex invariant set for $\{\mcT_\ell\}_{\ell\in \N}$.
\end{proposition}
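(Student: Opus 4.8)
The plan is to verify the defining property of a hypercomplex invariant set directly: I would show that each $\mcT_\ell$ maps the open ball $B_r(x_0)$ into itself, with the crucial observation being that the estimate \eqref{cond} holds \emph{uniformly} in $\ell$ (the same $s$, the same $M$, and the same centre $x_0$ serve every member of the sequence), so that a single threshold on the radius settles the whole family $\{\mcT_\ell\}_{\ell\in\N}$ at once. Writing $B_r(x_0) = \{x \in \A_{n+1}^k : d(x,x_0) < r\}$, where I identify $|x-x_0|$ with $d(x,x_0) = \|x-x_0\|$, the entire argument reduces to a one-dimensional comparison of distances.

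Concretely, I would fix an arbitrary $\ell \in \N$ and an arbitrary point $x \in B_r(x_0)$, so that $d(x,x_0) < r$. Applying the hypothesis \eqref{cond} to this $x$ yields
\[
d(\mcT_\ell(x), x_0) \le s\, d(x,x_0) + M.
\]
Since $s \ge 0$, the right-hand side is nondecreasing in $d(x,x_0)$, so I may replace $d(x,x_0)$ by its upper bound $r$ to obtain $d(\mcT_\ell(x), x_0) \le s\,r + M$.

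It then remains to check that $s\,r + M < r$. This inequality is equivalent to $M < (1-s)\,r$, i.e. to $r > M/(1-s)$, which is precisely the hypothesis on the radius (and is meaningful because $s \in [0,1)$ forces $1-s > 0$). Hence $d(\mcT_\ell(x), x_0) \le s\,r + M < r$, so $\mcT_\ell(x) \in B_r(x_0)$. As $\ell \in \N$ and $x \in B_r(x_0)$ were arbitrary, this establishes $\mcT_\ell(B_r(x_0)) \subseteq B_r(x_0)$ for every $\ell$, i.e. $B_r(x_0)$ is a hypercomplex invariant set for $\{\mcT_\ell\}_{\ell\in\N}$.

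I do not anticipate a genuine obstacle: the claim is a quantitative fixed-radius absorption estimate and the proof is a short chain of inequalities. The only two points deserving a moment's care are (i) the strictness of the final inequality, which ensures that the \emph{open} ball — not merely its closure — is invariant and is guaranteed by the strict hypothesis $r > M/(1-s)$; and (ii) the fact that the constants $s$ and $M$ in \eqref{cond} are independent of $\ell$, which is exactly what allows one common radius to absorb the entire sequence rather than forcing an $\ell$-dependent threshold.
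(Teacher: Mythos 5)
Your proof is correct. The paper itself does not write out an argument here --- it simply defers to \cite[Lemma 3.7 and Remark 3.8]{LDV} --- and the absorption estimate you give ($d(\mcT_\ell(x),x_0) \le s\,d(x,x_0) + M \le sr + M < r$ whenever $r > M/(1-s)$) is exactly the content of that cited lemma, so you have in effect supplied the details the paper leaves to the reference. Your two cautionary remarks (strictness of the final inequality for the open ball, and the $\ell$-uniformity of $s$, $M$, and $x_0$) are both apt and correctly handled.
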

\begin{proof}
The proof of this statement follows directly from \cite[Lemma 3.7]{LDV} and \cite[Remark 3.8]{LDV}.
\end{proof}

In case the transformations $\mcT_\ell$ are maps of the form \eqref{affine} then condition \eqref{cond} is satisfied with $x_0 = 0$, $M:= \sup\limits_{\ell\in \N}\|b_\ell\| < \infty$, and any $H_\ell$ with $\sup\limits_{\ell\in \N} \vertiii{H_\ell}=: s < 1$. For, if we choose $x_0:= 0$,
\begin{align*}
d(\mcT_\ell (x), 0) &= \|\mcT_\ell (x)\| = \|H_\ell x + b_\ell\| \leq \|H_\ell x\| + \|b_\ell\|\\
& \leq \vertiii{H_\ell}\cdot\|x\| + \|b_\ell\|.
\end{align*}
Hence, every ball centered at the origin of $\A_{n+1}^k$ of radius greater than $M/(1-s)$ is a hypercomplex invariant set for $\{\mcT_\ell\}_{\ell\in \N}$.

Now suppose that $\{\cF_\ell\}_{\ell\in \N}$ is a sequence of set-valued maps $\cF_\ell:\sH(\A_{n+1}^k)\to \sH(\A_{n+1}^k)$ defined by
\be\label{2.1}
\cF_\ell (F_0) := \bigcup_{i=1}^{n_\ell} f_{i,\ell} (F_0), \quad F_0\in \sH(\A_{n+1}^k),
\ee
where $\cF_\ell = \{f_{i,\ell}: i\in \N_{n_\ell}\}$ is a family of hypercomplex contractions constituting a hypercomplex IFS on the complete metrizable space $(\A_{n+1}^k,d)$. Setting $s_{i,\ell} := \Lip (f_{i,\ell})$, we obtain that $\Lip (\cF_\ell) = \max\{s_{i,\ell}: i\in \N_{n_\ell}\} < 1$.

The following definition is taken from \cite[Section 4]{LDV} using our setting with $X = \A_{n+1}^k$.

\begin{definition}
Let $F_0\in \sH(\A_{n+1}^k)$. For any $\ell\in \N$, the sequence
\be
\Psi_\ell (F_0) := \cF_1\circ\cF_{2} \circ \cdots \circ \cF_\ell (F_0)
\ee
is called the backward trajectory of $F_0$.
\end{definition}

\noindent
Backward trajectories converge under rather mild conditions 
and their limits generate new types of fractal sets. For more details, we refer the interested reader to \cite{LDV,DLM,M} .

The next theorem summarizes the convergence result for backward trajectories. 

\begin{theorem}
Let $\{\cF_\ell\}_{\ell\in\N}$ be a family of set-valued maps of the form \eqref{2.1} whose elements are collections $\cF_\ell = \{f_{i,\ell}: i\in \N_{n_\ell}\}$ of hypercomplex contractions constituting hypercomplex IFSs on the complete metrizable space $(\A_{n+1}^k,d)$. Suppose that 
\begin{enumerate}
\item[\emph{(i)}] there exists a nonempty closed hypercomplex invariant set $\mathscr{I}\subseteq \A_{n+1}^k$ for $\{f_{i,\ell}\}$, $i\in \N_{n_\ell}$, $\ell\in \N$,
\item[\emph{(ii)}] and
\be\label{lipcond}
\sum_{\ell=1}^\infty\prod_{j=1}^\ell \Lip (\cF_j) < \infty.
\ee
\end{enumerate}
Then the backward trajectories $\{\Psi_\ell (F_0)\}$ converge for any initial $F_0\subseteq \mathscr{I}$ to a unique hypercomplex attractor $F\subseteq \mathscr{I}$.
\end{theorem}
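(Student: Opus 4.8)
The plan is to verify that, for every admissible initial set $F_0\subseteq\mathscr{I}$, the backward trajectory $\{\Psi_\ell(F_0)\}$ is a Cauchy sequence in the Hausdorff--Pompeiu space and that its limit is independent of $F_0$. To set this up I would first restrict attention to the hyperspace $\sH(\mathscr{I})$. Since $\mathscr{I}$ is closed in the complete space $(\A_{n+1}^k,d)$, the pair $(\sH(\mathscr{I}),d_{\sH})$ is again complete, and by hypothesis (i) together with $f_{i,\ell}(\mathscr{I})\subseteq\mathscr{I}$ each operator $\cF_\ell$ maps $\sH(\mathscr{I})$ into itself. As recorded for a single IFS in Section 1, each $\cF_\ell$ is a contraction on this hyperspace with $\Lip(\cF_\ell)=\max\{\Lip(f_{i,\ell}):i\in\N_{n_\ell}\}<1$; consequently the composition $\Psi_\ell=\cF_1\circ\cdots\circ\cF_\ell$ is Lipschitz on $\sH(\mathscr{I})$ with factor $a_\ell:=\prod_{j=1}^\ell\Lip(\cF_j)$.

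Next I would produce the Cauchy estimate by a telescoping argument. Writing $\Psi_{\ell+1}(F_0)=\Psi_\ell(\cF_{\ell+1}(F_0))$ and using the Lipschitz factor of $\Psi_\ell$, one obtains
\[
d_{\sH}(\Psi_\ell(F_0),\Psi_{\ell+1}(F_0))\le a_\ell\,d_{\sH}(F_0,\cF_{\ell+1}(F_0)).
\]
If $C:=\sup_{\ell\in\N}d_{\sH}(F_0,\cF_{\ell+1}(F_0))$ is finite, then for $m>\ell$ the triangle inequality gives
\[
d_{\sH}(\Psi_\ell(F_0),\Psi_m(F_0))\le\sum_{p=\ell}^{m-1}a_p\,d_{\sH}(F_0,\cF_{p+1}(F_0))\le C\sum_{p\ge\ell}a_p,
\]
and the right-hand tail tends to $0$ as $\ell\to\infty$ precisely by the summability hypothesis (ii). Hence $\{\Psi_\ell(F_0)\}$ is Cauchy and, by completeness of $\sH(\mathscr{I})$, converges to some $F\subseteq\mathscr{I}$.

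The main obstacle is the finiteness of the constant $C$, i.e. a uniform bound on $d_{\sH}(F_0,\cF_{\ell+1}(F_0))$. Both $F_0$ and every image $\cF_{\ell+1}(F_0)$ lie in $\mathscr{I}$, so it suffices to run the argument inside a bounded invariant region: given the compact, hence bounded, initial set $F_0$, I would choose a ball $B_r(x_0)\supseteq F_0$ that is invariant for the whole family $\{f_{i,\ell}\}$ --- its existence is exactly the content of the preceding Proposition in the affine case, whence $C\le\mathrm{diam}(B_r(x_0))$. In the general closed case one must argue that contractivity confines all iterates to a fixed bounded subset of $\mathscr{I}$; this is the step where the invariance hypothesis has to be used quantitatively rather than merely set-theoretically, and it is the only place where something beyond the formal contraction estimates is required.

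Finally I would establish uniqueness and independence of the initial set. For two admissible initial sets $F_0,G_0\subseteq\mathscr{I}$ the Lipschitz bound for $\Psi_\ell$ yields $d_{\sH}(\Psi_\ell(F_0),\Psi_\ell(G_0))\le a_\ell\,d_{\sH}(F_0,G_0)$, and since $\sum_\ell a_\ell<\infty$ forces $a_\ell\to0$, the two trajectories have the same limit. Thus the attractor $F$ depends only on the family $\{\cF_\ell\}_{\ell\in\N}$ and not on $F_0$, which gives the asserted uniqueness and completes the proof.
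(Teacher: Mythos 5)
Your argument reconstructs, essentially step for step, the proof that the paper itself does not spell out but delegates to the cited work of Levin, Dyn and Viswanathan: restrict to the complete hyperspace $\sH(\mathscr{I})$, note that $\Psi_{\ell+1}=\Psi_\ell\circ\cF_{\ell+1}$ and that $\Psi_\ell$ is Lipschitz with factor $a_\ell=\prod_{j=1}^{\ell}\Lip(\cF_j)$, telescope to obtain the Cauchy estimate, and use $a_\ell\to 0$ to show the limit is independent of $F_0$. All of these steps are correct, so in substance you are following the same route as the source to which the paper's one-line proof refers.

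The one real issue is exactly the point you flag yourself: the finiteness of $C=\sup_{\ell}d_{\sH}\bigl(F_0,\cF_{\ell+1}(F_0)\bigr)$. You are right that this does not follow from hypothesis (i) as literally stated, and in fact the theorem fails if $\mathscr{I}$ is merely closed: take $k=1$, $n_\ell=1$, $f_{1,\ell}(x)=\tfrac12 x+4^{\ell}e_0$ and $\mathscr{I}=\A_{n+1}$; then (i) and (ii) hold with $\Lip(\cF_\ell)=\tfrac12$, yet $d_{\sH}(F_0,\cF_{\ell+1}(F_0))$ grows like $4^{\ell}$ and $\Psi_\ell(\{0\})$ escapes to infinity. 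The cited result assumes a compact (or bounded closed) invariant domain, and the Proposition preceding the theorem is precisely the device for manufacturing such a bounded invariant ball. With ``bounded'' added to (i) one gets $C\le\mathrm{diam}(\mathscr{I})<\infty$ and your proof closes; so the gap you identify is a defect of the theorem's hypotheses rather than of your argument. To make your writeup self-contained you should state explicitly that $\mathscr{I}$ is assumed bounded (or that the $f_{i,\ell}$ satisfy the uniform estimate \eqref{cond}, which yields a bounded invariant ball containing $F_0$) and then simply bound $d_{\sH}(F_0,\cF_{\ell+1}(F_0))$ by the diameter of that set.
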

\begin{proof}
The proof can be found in \cite{LDV} . Note that, as the proof only involves point sets and moduli of numbers, it immediately applies to the hypercomplex setting.
\end{proof}
\begin{example}
Here, we take $n := 3$ and $k:=1$ and write $\bH$ for $\A_{4}^1$. Define a family $\{\cF_\ell\}_{\ell\in\N}$ of set-valued epimorphism on $\sH$ whose members are as follows:
\begin{align*}
\cF_\ell := \begin{cases}
\cF_1, & 10(j-1) < \ell \leq 10j -5,\\
\cF_2, & 10j-5 < \ell \leq 10j,
\end{cases}\quad j\in \N,
\end{align*}
where
\begin{align*}
\cF_1 &:= \{q_1 x, q_1 x + \mathbbm{1}-q_1\},\\
\cF_2 & := \{0.7 \hat{q_2} x \hat{q_2} + 0.1, 0.7 \hat{q_2} x \hat{q_2} - 0.1\},
\end{align*}
with $q_1:= 0.75e_0$, $\mathbbm{1}:= e_0+e_1+e_2+e_3$, and $\hat{q_2} := \sqrt{\frac{10}{3}}(0.3e_0-0.1e_1+0.4e_2-0.2e_3)$.

Figure \ref{fig3} displays the projection of the attractor $F$ of the backward trajectory $\{\Psi_\ell\}_{\ell\in \N}$ onto the $e_0-e_1$-plane at two different levels. 
\begin{figure}
\begin{center}
\includegraphics[width= 3cm, height = 2cm]{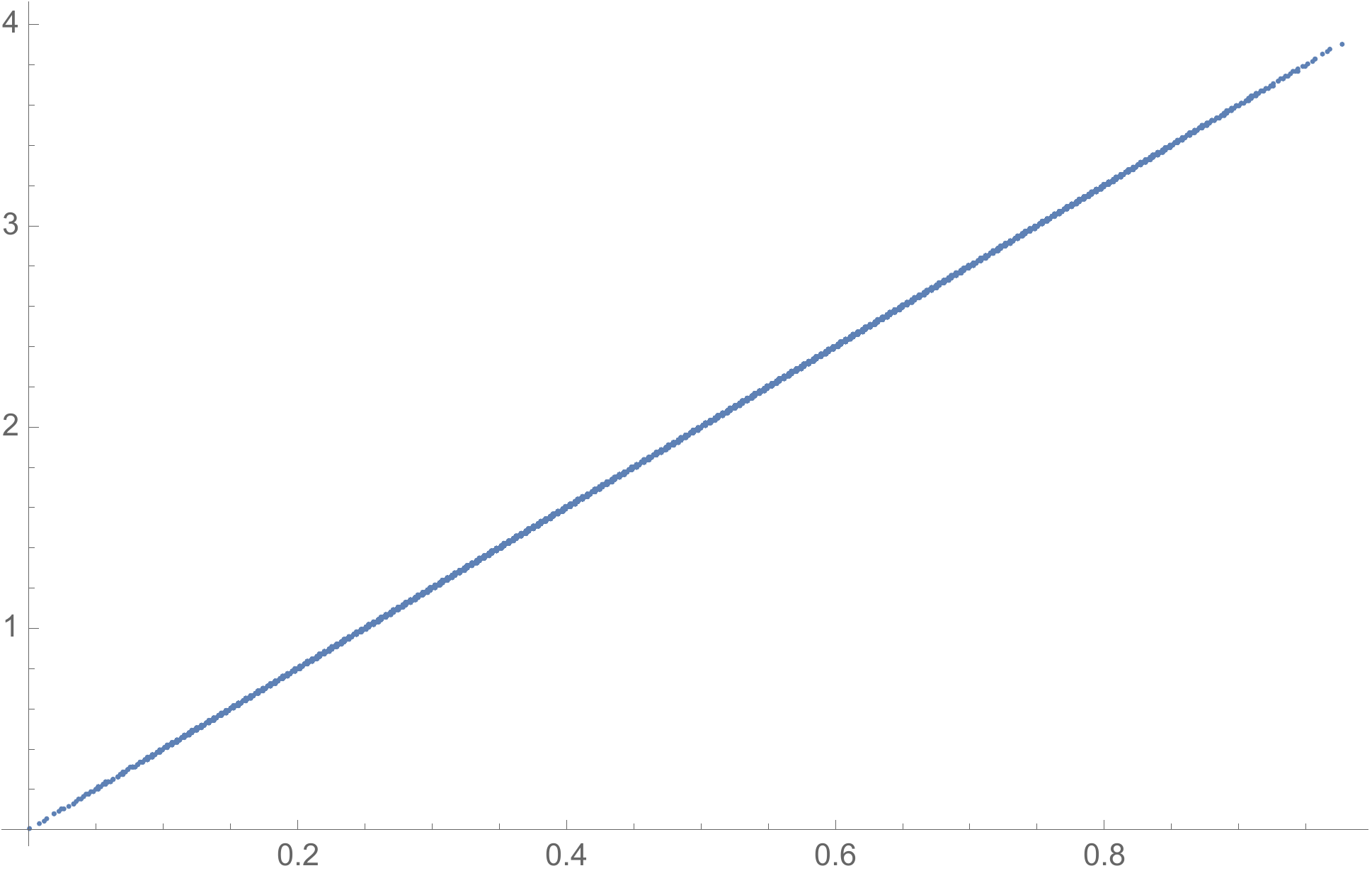}\hspace{1.5cm}
\includegraphics[width= 3.25cm, height = 2.5cm]{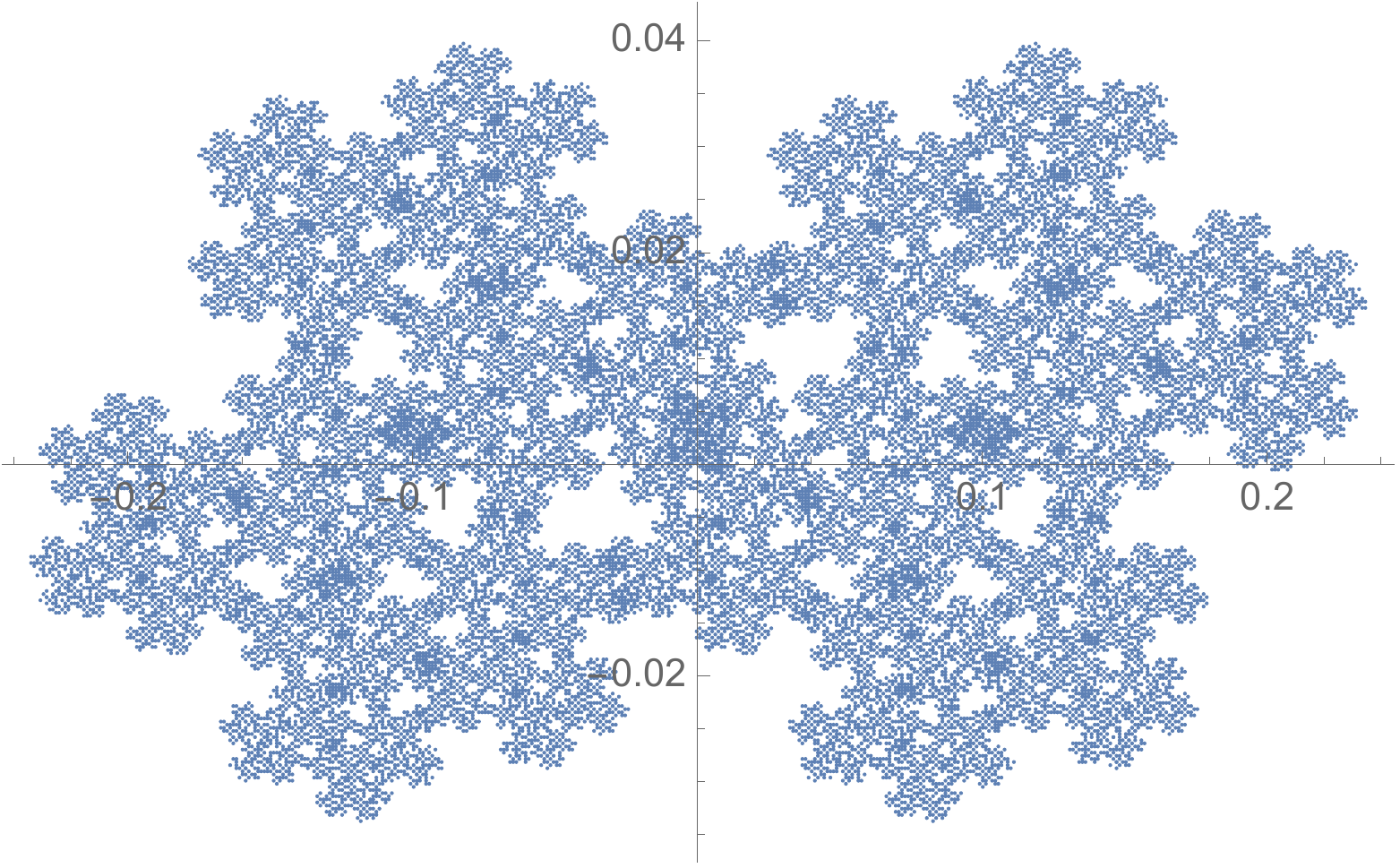}\caption{The projection of the attractor $F$ of the backward trajectory $\{\Psi\}_{k\in \N}$ onto the $e_0-e_1$-plane at two different levels. The attractor is smooth at one level (left) and fractal at another (right).} \label{fig3}
\end{center}
\end{figure}

This example shows that hypercomplex attractors generated by backwards trajectories exhibit more flexibility in their shapes and that a proper choice of IFSs reveals different local behavior. This is due to the fact that in the sequence
\[
\cF_1\circ\cF_2\circ \cdots \cF_{\ell-1}\circ\cF_\ell (F_0), \quad F_0\in \sH(\A_{n+1}^k),
\]
the global shape of the hypercomplex attractor is determined by the initial maps $\cF_1\circ \cF_2\circ \cdots$ whereas the local shape is determined by the terminal maps $\cF_{\ell-1}\circ\cF_\ell\circ\cdots$. In addition, the non-commutative algebraic structure of $\bH$ yields a broader class of attractors than in the case $\R^4$.
\end{example}

%
%
%

\end{document}